\documentclass[12pt]{amsart}

\usepackage{amsmath, amssymb, graphics, setspace}
\usepackage[usenames]{color}
\input xypic

\usepackage{algorithm}
\usepackage[noend]{algpseudocode}

\algblock{Input}{EndInput}
\algnotext{EndInput}
\algblock{Output}{EndOutput}
\algnotext{EndOutput}

\usepackage{nicematrix}
\renewcommand*\Call[2]{\textproc{#1}(#2)}

\makeatletter
\def\BState{\State\hskip-\ALG@thistlm}
\makeatother

\usepackage{verbatim}
\usepackage{url}
\usepackage{bm}
\usepackage{tikz}
\usepackage{algorithm}
\usepackage[noend]{algpseudocode}
\renewcommand*\Call[2]{\textproc{#1}(#2)}
\newcommand{\mathsym}[1]{{}}
\newcommand{\unicode}[1]{{}}

\makeatletter
\def\BState{\State\hskip-\ALG@thistlm}
\makeatother

\usepackage{amsmath}
\usepackage[centertags]{amsmath}
\usepackage{amsfonts}
\usepackage{amssymb}
\usepackage{amsthm}
\usepackage{newlfont}
\usepackage{url}
\usepackage{bm}
\usepackage{chngcntr}
\usepackage{tikz}
\usepackage{algorithm}
\usepackage{algpseudocode}
\theoremstyle{plain}
\newtheorem{thm}{Theorem}
\newtheorem{cor}[thm]{Corollary}

\newtheorem{lem*}[thm]{Lemma}
\newtheorem{prop}[thm]{Proposition}

\newtheorem{problem}[thm]{Problem}

\theoremstyle{definition}

\theoremstyle{remark}
\newtheorem{rem}{Remark}
\newtheorem{rem*}{Remark}

\newtheorem{example}[rem]{Example}

\numberwithin{rem}{section} 
\numberwithin{dfn}{section} 
\numberwithin{equation}{section} 
\numberwithin{thm}{section} 

\def\!{\operatorname{!}}

\def\1{\bold 1}

\def\Mat{\operatorname{Mat}}
\def\rank{\operatorname{rank}}
\def\Span{\operatorname{Span}}
\def\Col{\operatorname{Col}}

\newcommand{\Flag}{\bm{\mathrm{Flag}}}
\newcommand{\Required}{\bm{\mathrm{Required}}}
\newcommand{\Selected}{\bm{\mathrm{Selected}}}
\newcommand{\Basis}{\bm{\mathrm{Basis}}}
\newcommand{\OL}{\bm{\mathrm{OL}}}

\usepackage{amssymb}

\usepackage{enumitem}

\usepackage{graphicx}
\usepackage{xy}
\usepackage{float}

\makeatletter
\@namedef{subjclassname@2020}{%
	\textup{2020} Mathematics Subject Classification}
\makeatother

\usepackage[T1]{fontenc}

\newtheorem{theorem}{Theorem}[section]

\theoremstyle{definition}
\newtheorem{definition}[theorem]{Definition}
\newtheorem{remark}[theorem]{Remark}

\numberwithin{equation}{section}

\title{Compatible Bases and Actuator Scheduling}

\author{Dawid E. K{\k e}dzierski}
\address{ Institute of Mathematics, University of Szczecin,\\ ul. Wielkopolska 15, 70-451 Szczecin, Poland\\
}
\email{dawidedmundkedzierski@gmail.com}
\author{Piotr Kraso{\'n}}
\address{Department of Mathematics, Ariel University, Ariel 40700, Israel}
	\email{ piotrkras26@gmail.com}
\keywords{compatible bases, nested subspaces, rank profiles,
representative systems, actuator scheduling, controllability}

\subjclass[2020]{15A03, 93B03, 93B05}

\date{}

\begin{document}

\begin{abstract}
We study a constrained basis-selection problem for an ordered block
matrix \(B=\big[\, B_1\mid\cdots\mid B_n\,\big]\).  The successive tail matrices determine
a nested family of column spaces, and we consider bases of
representatives which are compatible with this induced flag.  Our main
linear-algebraic result shows that, among bases of representatives,
compatibility with the entire flag is characterized by the intrinsic
block quotas
\(r_k=w_k-w_{k+1}\), where \(w_k\) is the rank of the \(k\)-th tail
matrix.

We then show that the same structure arises naturally in
finite-horizon linear control.  For the reachability matrix
\(\big[\,A^{N-1}G\mid \cdots\mid G\,\big]\), requiring every actuator to be used exactly
once turns an actuator schedule into a representative system.
Preservation of the complete reachability profile is then equivalent
to compatibility with the tail-space flag.

This correspondence leads to an exact depth-first search with a
future-rank pruning criterion.  For an explicit structured family and
a fixed search ordering, the baseline search visits
\(3\cdot2^m+m-2\) nodes before the first solution, whereas future-rank
pruning reduces this number to \(3m+1\).  Exact-arithmetic experiments
reproduce these formulas for all tested instances.
\end{abstract}

\maketitle

\section{Introduction} 

Consider an ordered block matrix
\[
B=\big[\, B_1\mid \cdots\mid B_n \,\big].
\]
For each column position, the corresponding columns of the blocks form a family of possible representatives. The classical problem is to choose one vector from each family so that the selected vectors are linearly independent, as in the theory of independent transversals and Rado's theorem \cite{Rado1942}. Related representative-selection and matroid structures are discussed in \cite{EdmondsFulkerson1965,Hall1935, Oxley2011,Schrijver2003}.

The ordered block structure leads naturally to the tail matrices
\[
B_{\ge k}=\big[\, B_k\mid \cdots \mid B_n \,\big]
\]
and their nested column spaces
\[
W_n\subseteq W_{n-1}\subseteq\cdots\subseteq W_1.
\]
We study bases of representatives compatible with this flag, meaning that for every \(k\), the representatives selected from \(B_k,\ldots,B_n\) span \(W_k\).

Let
\[
w_k=\dim W_k,\qquad r_k=w_k-w_{k+1},
\]
with \(w_{n+1}=0\). Our main linear-algebraic result shows that, among bases of representatives, compatibility with the whole flag is equivalent to the simple block-count condition that exactly \(r_k\) vectors are selected from \(B_k\).

This structure has a direct interpretation in finite-horizon linear control. For
\[
x_{t+1}=Ax_t+Gu_t,
\]
the reachability matrix
\[
\big[\, A^{N-1}G\mid A^{N-2}G\mid \cdots \mid G\, \big]
\]
has the same block form. If every actuator is used exactly once, choosing its activation time amounts to choosing one representative from the corresponding column family. We show that schedules preserving the complete tail reachability profile are precisely the compatible bases above. In particular, this requirement is stronger than terminal controllability. Related questions of controllability, sparse actuator scheduling, and actuator placement have been studied in \cite{BallottaJosephThete2024,Joseph2024,KondapiEtAl2024,KondapiEtAl2025, Sontag1998,
SummersCortesiLygeros2016}.

The same characterization yields an exact depth-first search. In addition to block quotas and linear independence, a future-rank test rejects partial selections that can no longer attain the required tail ranks. For an explicit structured family and a fixed search ordering, the baseline search visits
\[
3\cdot 2^m+m-2
\]
nodes before the first solution, whereas future-rank pruning reduces this number to
\[
3m+1.
\]
Exact-arithmetic experiments reproduce these formulas for all tested instances.

\section{Rectangular tail matrices and rank profiles}
\label{sec:tails}

Let \(L\) be a field and let \(V=L^d\).  Consider a block matrix
\(B=\big[\, B_1\mid B_2\mid\cdots\mid B_n\, \big]\), where
\(B_j\in\Mat_d(L)\).  For each \(j=1,\ldots,n\), write
\(B_j=\big[b_j^1,b_j^2,\ldots,b_j^d\big]\), where \(b_j^i\in V\)
denotes the \(i\)-th column of \(B_j\).

\begin{definition}
For \(k=1,\ldots,n\), the \emph{tail matrix} of level \(k\) is
\[
 B_{\geq k}:=\big[\, B_k\mid B_{k+1} \mid \cdots \mid B_n \,\big].
\]
\end{definition}

Thus \(B_{\geq k}\in\Mat_{d\times d(n-k+1)}(L)\).  In particular,
the natural matrices governing the problem are rectangular, even
though the individual blocks \(B_j\) are square.

Set
\(w_k:=\rank B_{\geq k}\) for \(k=1,\ldots,n\), and
\(w_{n+1}:=0\).  Then
\(0=w_{n+1}\leq w_n\leq\cdots\leq w_1\leq d\).

\begin{definition}
The sequence \((w_1,\ldots,w_n)\) is called the
\emph{tail rank profile} of \(B\).
\end{definition}

For \(k=1,\ldots,n\), put \(r_k:=w_k-w_{k+1}\).  Then
\(r_k\geq0\) and
\(\sum\limits_{k=1}^n r_k=w_1=\rank B\).  In particular, if \(B\) has
full row rank, then \(\sum\limits_{k=1}^n r_k=d\).

For \(k=1,\ldots,n\), let
\(W_k:=\Col(B_{\geq k})\) and set \(W_{n+1}:=\{0\}\).
Equivalently,
\(W_k=\Span\big\{b_j^i:j\geq k,\ 1\leq i\leq d\big\}\).
Since \(B_{\geq k+1}\) is obtained from \(B_{\geq k}\) by deleting
the block \(B_k\), these spaces form the flag
\begin{equation}
\label{eq:flag}
 \{0\}=W_{n+1}
 \subseteq W_n
 \subseteq W_{n-1}
 \subseteq\cdots
 \subseteq W_1
 \subseteq V.
\end{equation}
Moreover, \(\dim W_k=w_k\), and hence
\[
 r_k
 =
 \dim W_k-\dim W_{k+1}
 =
 \dim\big(W_k/W_{k+1}\big).
\]
Thus \(r_k\) measures the increase in dimension contributed at level
\(k\) relative to the subsequent tail.

\section{Compatible bases of representatives}
\label{sec:compatible}

For each column position \(i=1,\ldots,d\), consider the indexed family
\[
 \mathcal V^i=\big\{(j,b_j^i):j=1,\ldots,n\big\}.
\]
Thus the block index from which a representative is selected is
regarded as part of the selection data. A representative system chooses
one indexed vector \((j_i,b_{j_i}^i)\) from each family
\(\mathcal V^i\).

\begin{definition}
A basis
\(\mathcal B=\big(b_{j_1}^1,\ldots,b_{j_d}^d\big)\) of \(V\) is called a
\emph{basis of representatives compatible with the flag}
\eqref{eq:flag} if, for every \(k=1,\ldots,n\),
\[
 \Span\big(\mathcal B_{\geq k}\big)=W_k,
 \qquad
 \mathcal B_{\geq k}
 :=
 \big\{b_{j_s}^s\in\mathcal B:j_s\geq k \big\}.
\]
\end{definition}

We always regard the indices \(j_1,\ldots,j_d\) as part of the data
of the representative system. The tail rank profile determines
automatically how many representatives must come from each block.

\begin{prop}
\label{prop:required}
Let \(\mathcal B=\big(b_{j_1}^1,\ldots,b_{j_d}^d \big)\) be a basis of
representatives compatible with the flag. Then, for every
\(k=1,\ldots,n\),
\[
 \#\big\{i:j_i=k \big\}=w_k-w_{k+1}=r_k.
\]
\end{prop}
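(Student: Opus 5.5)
The plan is to count dimensions along the flag. The key observation is that $\mathcal B_{\geq k}$ is a subset of a basis of $V$, so it is linearly independent. Hence
\[
\#\mathcal B_{\geq k}=\dim\Span(\mathcal B_{\geq k}).
\]
Compatibility gives $\Span(\mathcal B_{\geq k})=W_k$. Therefore, for every $k=1,\ldots,n$,
\[
\#\{i: j_i\geq k\}=\#\mathcal B_{\geq k}=\dim W_k=w_k .
\]

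One point needs care. Since the indices $j_1,\ldots,j_d$ are part of the data, distinct positions $i$ give distinct elements of the family, even if two columns happen to coincide as vectors. But if $b_{j_s}^s=b_{j_t}^t$ with $s\neq t$, the tuple $\mathcal B$ would not be a basis. So the map $i\mapsto b_{j_i}^i$ is injective on $\{i:j_i\geq k\}$, and the count is unambiguous.

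Next, I would subtract consecutive counts. Since $\{i:j_i=k\}=\{i:j_i\geq k\}\setminus\{i:j_i\geq k+1\}$, for $k<n$ we get
\[
\#\{i:j_i=k\}=w_k-w_{k+1}=r_k .
\]
For $k=n$, the set $\{i:j_i\geq n+1\}$ is empty, and $w_{n+1}=0$ by convention, so the same formula gives $r_n=w_n$.

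I do not expect a genuine obstacle. The only step needing attention is the justification above: the representative vectors are pairwise distinct, so the cardinality of $\mathcal B_{\geq k}$ as a set equals the number of indices $i$ with $j_i\geq k$, and linear independence turns this count into $\dim W_k$.

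As a byproduct, taking $k=1$ gives $d=w_1$. So a compatible basis can exist only when $B$ has full row rank, which is consistent with $\sum_k r_k=d$.
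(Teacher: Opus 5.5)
Your proposal is correct and follows essentially the same route as the paper. Both arguments use linear independence of the subfamily together with compatibility to get $\#\mathcal B_{\geq k}=w_k$, then subtract consecutive counts. Your remarks on distinctness of the selected vectors and on the case $k=n$ (where $w_{n+1}=0$) are harmless refinements that the paper leaves implicit.
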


\begin{proof}
Since \(\mathcal B\) is a basis, every subfamily is linearly
independent. Compatibility therefore gives
\(\#\mathcal B_{\geq k}=\dim W_k=w_k\) and
\(\#\mathcal B_{\geq k+1}=w_{k+1}\).
Their difference is precisely the number of vectors selected from
the \(k\)-th block, which proves the assertion.
\end{proof}

\begin{prop}[Quota characterization]
\label{prop:quota-characterization}
Let \(\mathcal B=\big(b_{j_1}^1,\ldots,b_{j_d}^d\big)\) be a basis of
representatives. Then \(\mathcal B\) is compatible with the
tail-space flag if and only if
\[
 \#\big\{i:j_i=k\big\}=r_k,
 \qquad k=1,\ldots,n.
\]
\end{prop}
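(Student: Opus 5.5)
The forward implication is exactly Proposition~\ref{prop:required}, so only the converse needs an argument. Assume \(\mathcal B\) is a basis of representatives with \(\#\{i:j_i=k\}=r_k\) for every \(k\). The plan is to compare, for each fixed \(k\), the subspace \(\Span(\mathcal B_{\ge k})\) with \(W_k\) by a containment-plus-dimension argument.

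First, containment. If \(j_s\ge k\), then \(b_{j_s}^s\) is a column of the block \(B_{j_s}\), which is part of the tail matrix \(B_{\ge k}\). Hence \(b_{j_s}^s\in W_k\), and so \(\Span(\mathcal B_{\ge k})\subseteq W_k\).

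Second, counting. Summing the quotas gives
\[
\#\mathcal B_{\ge k}=\sum_{l=k}^n \#\{i:j_i=l\}=\sum_{l=k}^n r_l=\sum_{l=k}^n (w_l-w_{l+1})=w_k,
\]
by telescoping with \(w_{n+1}=0\). Since \(\mathcal B\) is a basis of \(V\), its subfamily \(\mathcal B_{\ge k}\) is linearly independent. Therefore \(\dim\Span(\mathcal B_{\ge k})=w_k=\dim W_k\).

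Combining the two steps, a subspace contained in \(W_k\) with the same finite dimension must equal \(W_k\). This holds for every \(k=1,\ldots,n\), which is exactly compatibility.

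The only point needing care is the bookkeeping: the \(\mathcal B_{\ge k}\) are indexed families with distinct indices \(s\). The vectors \(b_{j_s}^s\) for distinct \(s\) are distinct members of a basis, so no coincidences can lower the count. The telescoping step is the heart of the argument.

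As a side remark, the hypotheses force \(w_1=d\), since \(\sum_k r_k=d\) is the size of the basis. So a basis of representatives can satisfy the quotas only when \(B\) has full row rank.
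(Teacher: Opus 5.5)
Your proof is correct and matches the paper's argument: necessity from Proposition~\ref{prop:required}, and sufficiency by telescoping the quotas to get \(\#\mathcal B_{\geq k}=w_k\), then using linear independence of the subfamily and the containment \(\Span(\mathcal B_{\geq k})\subseteq W_k\) to force equality by dimension. Your side remark that the quotas force \(w_1=d\) is also correct.
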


\begin{proof}
Necessity follows from Proposition~\ref{prop:required}.

Conversely, suppose that
\(\#\{i:j_i=k\}=r_k\) for every \(k\). Fix \(k\). 
Then
\[
 \#\mathcal B_{\geq k}
 =
 \sum_{j=k}^n r_j
 =
 w_k.
\]
Since \(\mathcal B_{\geq k}\) is a subfamily of the basis
\(\mathcal B\), it is linearly independent, and hence
\(\dim\Span\big(\mathcal B_{\geq k}\big)=w_k\).

Every vector in \(\mathcal B_{\geq k}\) belongs to one of the blocks
\(B_k,\ldots,B_n\). Hence
\[
 \Span\big(\mathcal B_{\geq k}\big)\subseteq W_k.
\]
Since both spaces have dimension \(w_k\), they are equal.
 Thus
\(\Span\big(\mathcal B_{\geq k} \big)=W_k\) for every \(k\), and
\(\mathcal B\) is compatible with the tail-space flag.
\end{proof}
\begin{remark}
The numbers \(r_k\) are determined entirely by the matrix \(B\).
They specify how many representatives a compatible basis must select
from each block \(B_k\). Conversely, once a representative system is
known to be a basis, these block counts are sufficient for
compatibility: it is compatible with the tail-space flag if and only if
it selects exactly \(r_k\) vectors from \(B_k\) for every \(k\).
\end{remark}

Assume now that \(\rank B=d\), and set
\[
 k_0:=\max\{k:w_k=d\}.
\]

\begin{prop}[Trimming]
\label{prop:trimming}
Let \(k_0=\max\big\{k:w_k=d\big\}\). Then
\(r_1=\cdots=r_{k_0-1}=0\). Consequently, no compatible basis of
representatives contains a vector from
\(B_1,\ldots,B_{k_0-1}\), and the compatible-basis problem for \(B\)
is equivalent to the same problem for the trimmed matrix
\[
 B_{\mathrm{trim}}:=B_{\geq k_0}.
\]
\end{prop}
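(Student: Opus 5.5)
The plan has three steps: compute the quotas $r_k$ for $k<k_0$ directly from the monotonicity of the tail rank profile, apply Proposition~\ref{prop:required} to exclude blocks $B_1,\ldots,B_{k_0-1}$, and finally set up a bijection between compatible bases for $B$ and for $B_{\mathrm{trim}}$.

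\emph{Vanishing quotas.} Since $w_k\le d$ for all $k$ and the profile is nonincreasing, $w_{k_0}=d$ forces $w_k=d$ for every $k\le k_0$. Hence $r_k=w_k-w_{k+1}=d-d=0$ for $k=1,\ldots,k_0-1$, because for such $k$ both $k$ and $k+1$ are $\le k_0$. In addition, $W_k=V$ for all $k\le k_0$.

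\emph{Excluding the leading blocks.} For any compatible basis $\mathcal B$, Proposition~\ref{prop:required} gives $\#\{i:j_i=k\}=r_k=0$ for $k<k_0$. So every index satisfies $j_i\ge k_0$, and no vector of $\mathcal B$ comes from $B_1,\ldots,B_{k_0-1}$.

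\emph{Equivalence with the trimmed problem.} Index the blocks of $B_{\mathrm{trim}}$ as $B_{k_0},\ldots,B_n$, or shift the indices by $k_0-1$. Its tail matrices are exactly $B_{\ge k}$ for $k\ge k_0$, so its tail spaces are $W_{k_0},\ldots,W_n$ and its rank profile is $(w_{k_0},\ldots,w_n)$, with quotas $r_{k_0},\ldots,r_n$.

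The map sending a representative system with all $j_i\ge k_0$ to the same system viewed inside $B_{\mathrm{trim}}$ is a bijection onto the representative systems of $B_{\mathrm{trim}}$. I will check that compatibility is preserved in both directions.
\begin{itemize}
\item For $k\ge k_0$, the compatibility conditions are literally identical for $B$ and for $B_{\mathrm{trim}}$.
\item For $k<k_0$, we have $\mathcal B_{\ge k}=\mathcal B_{\ge k_0}=\mathcal B$, since all $j_i\ge k_0$, and $W_k=V=W_{k_0}$. So these conditions reduce to the one at level $k_0$, namely that $\mathcal B$ spans $V$, which is automatic for a basis.
\end{itemize}
Alternatively, one can argue through Proposition~\ref{prop:quota-characterization}: the quota conditions match because $r_k=0$ for $k<k_0$.

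The only delicate point is bookkeeping. Being a basis of $V$ is the same property in both settings, and the index shift must not be confused with the block labels.
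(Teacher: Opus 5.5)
Your proposal is correct and follows the paper's argument. Monotonicity of the tail rank profile forces $w_1=\cdots=w_{k_0}=d$, hence $r_k=0$ for $k<k_0$, and the necessity direction of the quota characterization (Proposition~\ref{prop:required}) then excludes the leading blocks; your level-by-level check that the conditions for $k<k_0$ collapse to $\Span(\mathcal B)=V$ simply makes explicit the deletion step the paper states in one sentence.
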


\begin{proof}
By definition of \(k_0\),
\(w_1=\cdots=w_{k_0}=d\), so
\(r_j=w_j-w_{j+1}=0\) for every \(j<k_0\).
By Proposition~\ref{prop:quota-characterization}, a compatible basis
therefore uses no representatives from
\(B_1,\ldots,B_{k_0-1}\). These blocks may consequently be deleted
without changing either the existence or the set of compatible bases.
\end{proof}

Thus \(B_{\mathrm{trim}}\) is the shortest tail of \(B\) having full
row rank.

\begin{problem}[Compatible representative basis]
\label{problem:compatiblebasis}
Given a block matrix \(B\) of full row rank, determine whether it
admits a basis of representatives compatible with its tail-space flag,
and construct one whenever it exists.
\end{problem}

\section{Single-use actuator scheduling}
\label{sec:control}

We now show how the compatible-basis problem arises naturally in a
finite-horizon control setting.

Consider the discrete-time linear system
\begin{equation}
\label{eq:controlsystem}
 x_{t+1}=Ax_t+Gu_t,
 \qquad t=0,\ldots,N-1,
\end{equation}
where \(A\in\mathbb R^{d\times d}\) and
\(G=\big[g_1,\ldots,g_d\big]\in\mathbb R^{d\times d}\).
We write \(u_t=\big(u_t^1,\ldots,u_t^d\big)^T\).

The matrix \(A\) describes the evolution of the state during one time
step.  The matrix \(G\) is the actuator matrix: its \(i\)-th column
\(g_i\) is the state direction directly affected by actuator \(i\),
while \(u_t^i\) is the scalar input applied to this actuator at time
\(t\).

For reachability we take \(x_0=0\). Iterating
\eqref{eq:controlsystem} gives
\[
 x_N=\sum_{j=1}^{N}A^{N-j}G\,u_{j-1}.
\]
We call \(j\) the \(j\)-th \emph{control slot}. Thus slot \(j=1\)
corresponds to time \(t=0\), whereas slot \(j=N\) corresponds to the
last control action, at time \(t=N-1\).

If actuator \(i\) is used in slot \(j\) with scalar amplitude
\(\alpha\), its contribution to the terminal state is
\[
 \alpha A^{N-j}g_i.
\]
Hence an actuator used earlier is propagated through more powers of
\(A\): in the first slot its terminal direction is \(A^{N-1}g_i\),
whereas in the last slot it is simply \(g_i\).

For each slot \(j\), define
\begin{equation}
\label{eq:controlblocks}
 B_j:=A^{N-j}G.
\end{equation}
Thus the \(i\)-th column of \(B_j\) is \(A^{N-j}g_i\), the terminal
direction produced by using actuator \(i\) in slot \(j\). The full
finite-horizon reachability matrix is therefore
\[
 B=\big[\, A^{N-1}G\mid A^{N-2}G\mid\cdots\mid G\, \big].
\]
In this sense, \(B\) is a catalogue of all actuator--slot terminal
directions available over the horizon.

For \(k=1,\ldots,N\), the tail space
\[
 W_k
 =
 \Col\big[\,A^{N-k}G\mid A^{N-k-1}G\mid\cdots\mid G\,\big]
\]
has a direct control interpretation: it is the space of terminal-state
increments that can still be generated when only the control slots
\(k,k+1,\ldots,N\) remain available. Thus \(W_1\) is the finite-horizon
reachable space, whereas \(W_N=\operatorname{Col}(G)\).
We write \(w_k=\dim W_k\) and assume throughout this section that
\(w_1=d\). Equivalently, the system is reachable over the prescribed
horizon \(N\).

\subsection{Single-use schedules}

We now impose the restriction that every actuator is used exactly
once during the horizon.

\begin{definition}
A \emph{single-use actuator schedule} is a map
\[
 \tau:\{1,\ldots,d\}\longrightarrow\{1,\ldots,N\},
\]
where \(\tau(i)\) is the control slot in which actuator \(i\) is used.
Different actuators may be assigned to the same slot.
\end{definition}

The schedule specifies when an actuator is used, but not its scalar
amplitude. If actuator \(i\) is assigned amplitude \(\alpha_i\), then
the input in slot \(j\) is
\[
 u_{j-1}
 =
 \sum_{\tau(i)=j}\alpha_i e_i,
\]
where \(e_i\) is the \(i\)-th standard basis vector. Consequently,
the terminal state generated by the schedule is
\[
 x_N
 =
 \sum_{i=1}^d
 \alpha_i A^{N-\tau(i)}g_i.
\]

Thus the schedule selects the \(d\) terminal directions
\begin{equation}
\label{eq:schedule-directions}
 \mathcal B(\tau)
 =
 \big(
 A^{N-\tau(1)}g_1,\ldots,
 A^{N-\tau(d)}g_d
 \big)
 =
 \big(
 b_{\tau(1)}^1,\ldots,b_{\tau(d)}^d
 \big).
\end{equation}

\begin{definition}
A single-use schedule \(\tau\) is called
\emph{terminally controllable} if the selected directions span
\(\mathbb R^d\), equivalently,
\[
 \rank
 \big[\, A^{N-\tau(1)}g_1\ \cdots\ A^{N-\tau(d)}g_d\,\big]
 =d.
\]
\end{definition}

Terminal controllability concerns only the state that can be reached
at the end of the horizon. We shall require more: the schedule should
not exhaust the available control directions too early.

At the beginning of slot \(k\), the actuators with
\(\tau(i)<k\) have already been used, while those with
\(\tau(i)\geq k\) are still available. Their remaining terminal
directions span
\[
 \Span\big\{
 A^{N-\tau(i)}g_i:\tau(i)\geq k
 \big\}.
\]
If no single-use schedule is imposed, the terminal directions
available from slots \(k,\ldots,N\) span \(W_k\).

\begin{definition}
A single-use actuator schedule \(\tau\) is called
\emph{reachability-profile preserving} if
\begin{equation}
\label{eq:profilepreserving}
 \Span\big\{
 A^{N-\tau(i)}g_i:\tau(i)\geq k
 \big\}
 =
 W_k
\end{equation}
for every \(k=1,\ldots,N\).
\end{definition}

Thus a profile-preserving schedule retains at every stage exactly the
same future terminal reachability space as the unrestricted system.
For \(k=1\), this condition gives \(W_1=\mathbb R^d\), so every
profile-preserving schedule is terminally controllable.

\begin{example}
\label{ex:two-actuators}
Let \(d=2\), \(N=3\), and consider
\[
 A=
 \begin{bmatrix}
 1&0\\
 1&1
 \end{bmatrix},
 \qquad
 G=I_2=[g_1,g_2].
\]
Thus \(g_1=(1,0)^T\) and \(g_2=(0,1)^T\).

The two columns of \(G\) describe the instantaneous directions of the
two actuators. The matrix \(A\) describes how these directions evolve
between consecutive control slots. In particular,
\(Ag_1=(1,1)^T\) and \(Ag_2=(0,1)^T\).

Since \(N=3\), the three slot matrices are
\[
 B_1=A^2G=
 \begin{bmatrix}
 1&0\\
 2&1
 \end{bmatrix},
 \qquad
 B_2=AG=
 \begin{bmatrix}
 1&0\\
 1&1
 \end{bmatrix},
 \qquad
 B_3=G=
 \begin{bmatrix}
 1&0\\
 0&1
 \end{bmatrix}.
\]
The columns of \(B_1\) are the terminal effects of firing the two
actuators in the first slot, those of \(B_2\) correspond to the second
slot, and those of \(B_3\) to the final slot. Thus
\[
 B=\big[\,B_1\mid B_2\mid B_3\,\big]= \big[\,A^2G\mid AG\mid G\,\big]
\]
collects all possible actuator--slot choices.

Consider the schedule
\[
 \tau(1)=1,\qquad \tau(2)=3.
\]
It selects the first column of \(B_1\) and the second column of \(B_3\),
namely \(A^2g_1=(1,2)^T\) and \(g_2=(0,1)^T\).
With amplitudes \(\alpha_1,\alpha_2\),
\[
 x_3
 =
 \alpha_1(1,2)^T+\alpha_2(0,1)^T.
\]
These two directions are linearly independent, so the schedule is
terminally controllable.

It is not, however, reachability-profile preserving. At the beginning
of slot \(2\), actuator \(1\) has already been used, and only actuator
\(2\) remains available. Hence the remaining selected directions span
only \(\Span\{g_2\}\). On the other hand, the unrestricted system still
has both actuators available in slots \(2\) and \(3\), and
\(W_2=\mathbb R^2\). Therefore
\[
 \Span\{g_2\}\neq W_2.
\]

By contrast, the schedule
\[
 \tau(1)=\tau(2)=3
\]
keeps both actuators available until the last slot. Their terminal
directions are then \(g_1\) and \(g_2\), and the schedule preserves
the complete reachability profile.
\end{example}

\begin{remark}
A concrete physical picture is an idealized small satellite equipped
with two one-shot thrusters. The columns of \(G\) describe the
instantaneous state changes produced by the two thrusters, while
\(A\) describes how these changes propagate through the subsequent
dynamics.

Firing a thruster in an early slot may strongly influence the terminal
state because its effect is propagated for several time steps. Once
fired, however, that thruster is no longer available for later
corrections. In Example~\ref{ex:two-actuators}, firing actuator \(1\)
in the first slot still allows full terminal controllability, but it
removes one control direction from the remaining part of the horizon.
The profile-preserving condition prevents this premature loss of
future control authority.
\end{remark}

\subsection{Equivalence with compatible bases}

The preceding construction identifies the scheduling problem directly
with the representative-basis problem.

\begin{thm}
\label{thm:equivalence}
Let \(B_j=A^{N-j}G\). There is a one-to-one correspondence between
reachability-profile preserving single-use actuator schedules and
bases of representatives compatible with the tail-space flag of \(B\).
The correspondence is
\[
 \tau
 \longmapsto
 \mathcal B(\tau)
 =
 \big(b_{\tau(1)}^1,\ldots,b_{\tau(d)}^d \big).
\]
\end{thm}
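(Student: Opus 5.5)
The plan is to unwind the definitions on both sides and check that they match, with the map \(\tau\mapsto\mathcal B(\tau)\) and its obvious inverse. A representative system in the sense of Section~\ref{sec:compatible} is exactly the choice of an index \(j_i\in\{1,\ldots,n\}\) for each column position \(i\in\{1,\ldots,d\}\), where here \(n=N\). Because the indices are regarded as part of the selection data, it is the same thing as a map \(\tau:\{1,\ldots,d\}\to\{1,\ldots,N\}\) via \(j_i=\tau(i)\). Thus \(\tau\mapsto(b_{\tau(i)}^i)_i\) is a bijection between all single-use schedules and all representative systems, with inverse reading off the block indices. This holds even when two selected vectors coincide as elements of \(V\), since the indexed vectors \((j,b^i_j)\) keep track of the slot. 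It then remains to show that this bijection carries profile-preserving schedules exactly onto compatible bases.

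The key identification is that, for each \(k\),
\[
\mathcal B(\tau)_{\ge k}=\{b^i_{\tau(i)}:\tau(i)\ge k\}=\{A^{N-\tau(i)}g_i:\tau(i)\ge k\}.
\]
By \eqref{eq:controlblocks}, the tail space \(W_k\) of \(B\) is exactly the space appearing in \eqref{eq:profilepreserving}. Hence condition \eqref{eq:profilepreserving} for all \(k\) is literally the spanning condition \(\Span(\mathcal B_{\ge k})=W_k\) for all \(k\).

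The one point needing care is that the definition of a compatible basis also requires \(\mathcal B(\tau)\) to be a basis of \(V=\mathbb R^d\). So the main step, which is mild, is to show that profile preservation forces this.

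\begin{itemize}
\item Taking \(k=1\) gives \(\Span\mathcal B(\tau)=W_1=\mathbb R^d\), using the standing assumption \(w_1=d\).
\item The family \(\mathcal B(\tau)\) consists of exactly \(d\) vectors.
\item A spanning family of \(d\) vectors in \(\mathbb R^d\) is linearly independent, hence a basis.
\end{itemize}

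Conversely, if \(\mathcal B(\tau)\) is a compatible basis, then the spanning conditions are exactly \eqref{eq:profilepreserving}, so \(\tau\) is profile preserving.

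Combining these, the restriction of the bijection to profile-preserving schedules is well defined into compatible bases. It is also onto them, since every compatible basis is a representative system and so equals \(\mathcal B(\tau)\) for a unique \(\tau\). This gives the asserted one-to-one correspondence. No earlier proposition is needed, though one may remark that, by Proposition~\ref{prop:quota-characterization}, such schedules are exactly the terminally controllable ones that assign \(r_k\) actuators to slot \(k\).
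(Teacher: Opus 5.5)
Your proposal is correct and follows essentially the same argument as the paper: you identify schedules with representative systems via \(j_i=\tau(i)\), use the \(k=1\) case of \eqref{eq:profilepreserving} together with \(W_1=\mathbb R^d\) to see that the \(d\) selected vectors form a basis, and observe that \eqref{eq:profilepreserving} is literally the compatibility condition \(\Span(\mathcal B_{\geq k})=W_k\) for each \(k\). You are somewhat more explicit than the paper about why the map is a bijection and about where the standing assumption \(w_1=d\) is used, which usefully sharpens the paper's terse ``the converse is identical.''
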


\begin{proof}
A schedule chooses, for each actuator \(i\), exactly one indexed vector
\(b_{\tau(i)}^i\), and hence determines a representative system.

If the schedule is profile preserving, the condition for \(k=1\)
shows that the selected \(d\) vectors span \(\mathbb R^d\), and hence
form a basis. For every \(k\), the vectors corresponding to actuators
scheduled in slots \(k,\ldots,N\) are precisely
$
 \{b_{\tau(i)}^i:\tau(i)\geq k\}.
$
Therefore condition \eqref{eq:profilepreserving} is exactly the
compatibility condition with the tail space \(W_k\). The converse is
identical.
\end{proof}
The quota characterization of Section~\ref{sec:compatible} now gives
a simple description of all profile-preserving schedules.
\begin{cor}[Schedule characterization]
\label{cor:schedule-characterization}
A single-use actuator schedule \(\tau\) is reachability-profile
preserving if and only if
\begin{enumerate}[label=(\roman*)]
\item it is terminally controllable, that is,
\[
 \rank
 \big[\, A^{N-\tau(1)}g_1\ \cdots\ A^{N-\tau(d)}g_d\, \big]
 =d;
\]
\item exactly \(r_k=w_k-w_{k+1}\) actuators are assigned to slot
\(k\), that is,
\[
 \#\big\{i:\tau(i)=k \big\}=r_k,
 \qquad k=1,\ldots,N.
\]
\end{enumerate}
\end{cor}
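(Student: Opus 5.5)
The plan is to combine Theorem~\ref{thm:equivalence} with Proposition~\ref{prop:quota-characterization}, working entirely through the representative system \(\mathcal B(\tau)=\big(b_{\tau(1)}^1,\ldots,b_{\tau(d)}^d\big)\) of \eqref{eq:schedule-directions}. Throughout, \(B_j=A^{N-j}G\), so that \(b_j^i=A^{N-j}g_i\) and the tail spaces \(W_k\) are exactly those of Section~\ref{sec:tails}, with \(w_1=d\) by the standing assumption of this section.

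For the forward direction, assume \(\tau\) is reachability-profile preserving. Condition (i) follows directly from \eqref{eq:profilepreserving} at \(k=1\). Since \(\tau(i)\ge 1\) for every \(i\), the left side is the span of all selected directions and the right side is \(W_1=\mathbb R^d\); this was already noted after the definition. For (ii), I would apply Theorem~\ref{thm:equivalence}, which says that \(\mathcal B(\tau)\) is a basis of representatives compatible with the tail-space flag. Proposition~\ref{prop:required} then gives \(\#\{i:\tau(i)=k\}=r_k\) for each \(k\), because the block index of \(b_{\tau(i)}^i\) is \(\tau(i)\).

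For the converse, assume (i) and (ii). Condition (i) says the \(d\) vectors of \(\mathcal B(\tau)\) span \(\mathbb R^d\), so they form a basis of representatives. Condition (ii) is exactly the block-count hypothesis of Proposition~\ref{prop:quota-characterization}, since \(\#\{i:j_i=k\}\) with \(j_i=\tau(i)\) equals \(\#\{i:\tau(i)=k\}\). That proposition then shows \(\mathcal B(\tau)\) is compatible with the flag. By Theorem~\ref{thm:equivalence}, or directly because \(\mathcal B(\tau)_{\ge k}=\{A^{N-\tau(i)}g_i:\tau(i)\ge k\}\), this compatibility is precisely \eqref{eq:profilepreserving} for every \(k\). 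So \(\tau\) is profile preserving.

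No step is a genuine obstacle; the argument is bookkeeping. The one point to check carefully is that the index bookkeeping matches: the slot \(\tau(i)\) is the block index \(j_i\), and the set \(\mathcal B(\tau)_{\ge k}\) coincides with the set of directions of actuators with \(\tau(i)\ge k\). There is a minor subtlety when some actuators have the same terminal direction, so that two entries of \(\mathcal B(\tau)\) coincide as vectors. In that case (i) already fails, because \(d\) vectors that include a repeat cannot span \(\mathbb R^d\). Hence treating \(\mathcal B(\tau)\) as an indexed family of \(d\) vectors causes no ambiguity.
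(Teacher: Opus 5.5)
Your proposal is correct and follows essentially the same route as the paper. Like the paper, you pass to the representative system \(\mathcal B(\tau)\) via Theorem~\ref{thm:equivalence}, read (i) as the basis condition and (ii) as the block quotas, and conclude with Proposition~\ref{prop:quota-characterization}, whose necessity half is Proposition~\ref{prop:required}; your remark that repeated directions are ruled out by (i) is a harmless refinement the paper leaves implicit.
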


\begin{proof}
By Theorem~\ref{thm:equivalence}, the schedule corresponds to the
representative system \(\mathcal B(\tau)\). Condition (i) says that
this system is a basis, while condition (ii) gives the block quotas.
The result follows from
Proposition~\ref{prop:quota-characterization}.
\end{proof}

Thus \(r_k\) is exactly the number of actuators that a
reachability-profile preserving schedule must assign to control
slot \(k\).

\section{Preprocessing and candidate construction}
\label{sec:preprocessing}

We first reduce the search to the shortest tail having full row rank.
Starting from \(w_{n+1}=0\), we compute the tail ranks
\[
 w_j=\rank B_{\geq j},
 \qquad j=n,n-1,\ldots,1,
\]
and the corresponding block quotas
\[
 r_j=w_j-w_{j+1}.
\]
If \(\rank B<d\), no compatible basis exists. Otherwise, let
\[
 k_0=\max\big\{k:w_k=d\big\}.
\]
By Proposition~\ref{prop:trimming}, only the blocks
\(B_{k_0},\ldots,B_n\) need to be considered.

For each representative family \(i=1,\ldots,d\), define the candidate
list
\[
 L_i=
 \big\{(i,j,b_j^i):
   k_0\leq j\leq n,\;
   b_j^i\neq0\big\}.
\]
Within each list, candidates are considered in decreasing block index.
If some \(L_i\) is empty, no basis of representatives exists.

The candidate lists are ordered by increasing cardinality,
\[
 \OL=(L_{i_1},\ldots,L_{i_d}),
 \qquad
 \#L_{i_1}\leq\cdots\leq\#L_{i_d},
\]
with ties resolved by preserving the original order. This is a
fail-first ordering: representative families with fewer admissible
candidates are processed earlier.

\subsection{Baseline depth-first search}

We retain the basic depth-first search as a reference algorithm.
At each level it checks whether the block quota has been exhausted and
whether the new vector is independent of the previously selected
vectors.

\begin{algorithm}[H]
\caption{Baseline depth-first selection}
\label{alg:baseline}
\begin{algorithmic}[1]

\Procedure{DFSSelection}
{$\OL,\Required,k_0,n$}

\State \(d\gets\#\OL\)

\State
\(\Basis\gets
(\mathrm{None},\ldots,\mathrm{None})\)

\State
\(\Selected\gets
(\mathrm{None},\ldots,\mathrm{None})\)

\For{\(j=k_0,\ldots,n\)}
    \State \(\mathrm{Used}[j]\gets0\)
\EndFor

\Procedure{DFS}{$\ell$}

\If{\(\ell>d\)}
    \State \Return SUCCESS
\EndIf

\ForAll{\((i,m,c)\in\OL[\ell]\)}

    \If{\(\mathrm{Used}[m]\geq\Required[m]\)}
        \State \textbf{continue}
    \EndIf

    \If{\(c\notin\Span(\Basis[1:\ell-1])\)}

        \State \(\Basis[\ell]\gets c\)

        \State
        \(\Selected[\ell]\gets(i,m,c)\)

        \State
        \(\mathrm{Used}[m]\gets
        \mathrm{Used}[m]+1\)

        \State
        \(result\gets\Call{DFS}{\ell+1}\)

        \If{\(result=\mathrm{SUCCESS}\)}
            \State \Return \(\mathrm{SUCCESS}\)
        \EndIf

        \Statex \Comment{Backtrack}

        \State
        \(\mathrm{Used}[m]\gets
        \mathrm{Used}[m]-1\)

        \State
        \(\Basis[\ell]\gets\mathrm{None}\)

        \State
        \(\Selected[\ell]\gets\mathrm{None}\)

    \EndIf

\EndFor

\State \Return \(\mathrm{FAILURE}\)

\EndProcedure

\State \(result\gets\Call{DFS}{1}\)

\If{\(result=\mathrm{SUCCESS}\)}
    \State \Return \((\mathrm{True},\Selected)\)
\Else
    \State \Return \((\mathrm{False},\varnothing)\)
\EndIf
\EndProcedure
\end{algorithmic}
\end{algorithm}
Here \(\Required[k]=r_k\).

The complete baseline procedure consists of the preprocessing and
candidate construction described above, followed by
\textsc{DFSSelection}. If the search succeeds, the selected triples
\((i,m,c)\) determine the compatible basis by placing \(c\) in
representative position \(i\).

\section{Reachability-aware pruning}
\label{sec:modified}

The baseline search uses only local information: block quotas and
linear independence. We now use the tail spaces to detect partial
selections that cannot be completed.

Let \(S\) be a partial selection. For each \(k\), let \(S_{\geq k}\)
denote the vectors already selected from blocks \(B_k,\ldots,B_n\).
Let \(I(S)\) be the set of unassigned representative families, and let
\(u_j(S)\) be the number of vectors already selected from \(B_j\).

For \(k=1,\ldots,n\), define
\[
 R_{\geq k}(S)
 =
 \big\{b_j^i:
   i\in I(S),\;
   j\geq k,\;
   u_j(S)<r_j,\;
   b_j^i\neq0\big\}.
\]
Thus \(R_{\geq k}(S)\) collects all candidates that are individually
admissible for extending \(S\) within the tail
\(B_k,\ldots,B_n\). Since it may contain several candidates from the
same unassigned representative family, its span can be larger than the
space attainable by an actual completion.

\begin{example}
Let \(d=4\), \(n=3\), with
\[
 (r_1,r_2,r_3)=(1,1,2),
 \qquad
 (w_1,w_2,w_3)=(4,3,2),
\]
and
\[
 B_1=[0,0,e_4,0],\qquad
 B_2=[e_3,0,e_1+e_2,e_2],\qquad
 B_3=[e_1,e_2,0,0].
\]
For the partial selection
\[
 S=\{b_3^1,b_3^2\}=\{e_1,e_2\},
\]
we have \(I(S)=\{3,4\}\) and \(u_3(S)=2=r_3\). Hence, at level
\(k=2\),
\[
 R_{\geq2}(S)
 =
 \{b_2^3,b_2^4\}
 =
 \{e_1+e_2,e_2\}.
\]
Therefore
\[
 \dim\Span\bigl(S_{\geq2}\cup R_{\geq2}(S)\bigr)
 =2<3=w_2,
\]
so the partial selection cannot be completed.
\end{example}

\subsection{Future-rank feasibility}

\begin{prop}[Future-rank pruning]
If, for some \(k\),
\begin{equation}
\label{eq:future-rank}
 \dim\Span\bigl(S_{\geq k}\cup R_{\geq k}(S)\bigr)
 <w_k,
\end{equation}
then \(S\) cannot be extended to a compatible basis.
\end{prop}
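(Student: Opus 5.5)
We argue by contraposition. Suppose \(S\) extends to a compatible basis \(\mathcal B=(b_{j_1}^1,\ldots,b_{j_d}^d)\), meaning that \(S\subseteq\mathcal B\) with matching representative positions and block indices. We fix \(k\) and show that the left-hand side of \eqref{eq:future-rank} is at least \(w_k\). The strategy is to exhibit a spanning set of \(W_k\) inside \(S_{\geq k}\cup R_{\geq k}(S)\). The natural candidate is \(\mathcal B_{\geq k}\), which spans \(W_k\) by the definition of compatibility.

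\textbf{Splitting the tail of the basis.} Write \(\mathcal B_{\geq k}\) as the disjoint union of the vectors already in \(S\) and the new vectors. The vectors in \(S\) with block index \(\geq k\) are exactly \(S_{\geq k}\). Each new vector \(b_{j_i}^i\) with \(j_i\geq k\) belongs to an unassigned family, so \(i\in I(S)\). It is nonzero, since it lies in a basis.

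\textbf{Membership in \(R_{\geq k}(S)\).} It remains to check the quota condition \(u_{j_i}(S)<r_{j_i}\) for each new vector. By Proposition~\ref{prop:required}, \(\mathcal B\) contains exactly \(r_{j_i}\) vectors from block \(B_{j_i}\). Of these, \(u_{j_i}(S)\) already lie in \(S\), and \(b_{j_i}^i\) is a further one not in \(S\). Hence \(u_{j_i}(S)+1\leq r_{j_i}\), which gives the strict inequality. Therefore \(b_{j_i}^i\in R_{\geq k}(S)\), and so \(\mathcal B_{\geq k}\subseteq S_{\geq k}\cup R_{\geq k}(S)\).

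\textbf{Conclusion.} The inclusion gives \(W_k=\Span(\mathcal B_{\geq k})\subseteq\Span\bigl(S_{\geq k}\cup R_{\geq k}(S)\bigr)\), so the dimension of the right-hand span is at least \(w_k\). This holds for every \(k\), contradicting \eqref{eq:future-rank}.

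The only delicate point is the quota step. It requires using Proposition~\ref{prop:required} for the completed basis together with careful counting of which block-\(j\) vectors are already in \(S\). One should also note that, because the paper treats indices as selection data, \(S\) being "part of" \(\mathcal B\) means the families and block indices agree, so the counts \(u_j(S)\) are well defined. The remaining steps are immediate from the definitions.
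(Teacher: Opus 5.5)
Your proof is correct and follows the paper's argument. Both rest on the inclusion $\mathcal B_{\geq k}\subseteq S_{\geq k}\cup R_{\geq k}(S)$ for a compatible completion $\mathcal B$ followed by a comparison of spans; yours is simply phrased contrapositively, and you make explicit the quota step via Proposition~\ref{prop:required}, which the paper leaves implicit when it asserts the inclusion for ``any completion'' (the inclusion really needs the completion to respect the quotas, as a compatible one does).
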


\begin{proof}
For any completion \(\mathcal B\) of \(S\),
\[
 \mathcal B_{\geq k}
 \subseteq
 S_{\geq k}\cup R_{\geq k}(S).
\]
Hence
\[
 \dim\Span(\mathcal B_{\geq k})
 \leq
 \dim\Span\bigl(S_{\geq k}\cup R_{\geq k}(S)\bigr).
\]
If the latter dimension is smaller than
\(w_k=\dim W_k\), then
\(\Span(\mathcal B_{\geq k})\neq W_k\).
\end{proof}

\begin{remark}
The future-rank condition is necessary but not sufficient for
extendability, since \(R_{\geq k}(S)\) may contain several candidates
from the same unassigned representative family. Thus
\[
 \dim\Span\bigl(S_{\geq k}\cup R_{\geq k}(S)\bigr)
\]
is only an upper bound on the tail rank attainable by a completion.
\end{remark}

In the control interpretation, a partial schedule may therefore be
discarded whenever the already selected and still admissible
actuator--slot directions cannot span one of the required tail
reachability spaces \(W_k\).

\subsection{Residual capacity}

The remaining block quotas give another necessary condition. Define
\[
 q_j(S):=r_j-u_j(S),
\]
and, for \(J\subseteq\{k_0,\ldots,n\}\), let
\(\Gamma_S(J)\) be the set of unassigned representative families having
an admissible candidate in a block belonging to \(J\). Any extendable
partial selection satisfies
\begin{equation}
\label{eq:hallcondition}
 \sum_{j\in J}q_j(S)
 \leq
 |\Gamma_S(J)|
 \qquad
 \text{for every }J\subseteq\{k_0,\ldots,n\}.
\end{equation}
This is a Hall-type residual capacity condition. Equivalently, residual
capacity may be tested by a bipartite matching or maximum-flow problem.
It provides an additional safe pruning criterion.

\subsection{Future-feasibility procedure}

The implemented feasibility test combines the block-capacity check
with the future-rank condition.

\begin{algorithm}[H]
\caption{Future-feasibility test}
\label{alg:futurefeasible}
\begin{algorithmic}[1]

\Procedure{FutureFeasible}
{$\Selected,\mathrm{Used},\OL,
\Flag,\Required,k_0,n$}

\For{\(j=k_0,\ldots,n\)}
    \If{\(\mathrm{Used}[j]>\Required[j]\)}
        \State \Return \(\mathrm{False}\)
    \EndIf
\EndFor

\For{\(k=n,n-1,\ldots,k_0\)}

    \State construct \(S_{\geq k}\) from the selected triples

    \State construct \(R_{\geq k}(S)\) from the unassigned candidate lists

    \If{
    \(\rank\bigl(S_{\geq k}\cup R_{\geq k}(S)\bigr)
      <\Flag[k]\)
    }
        \State \Return \(\mathrm{False}\)
    \EndIf

\EndFor

\State \Return \(\mathrm{True}\)

\EndProcedure

\end{algorithmic}
\end{algorithm}
Here \(\Flag[k]=w_k\) and \(\Required[k]=r_k\).

\subsection{Modified depth-first search}

The reachability-aware search is obtained from the baseline search by
applying Algorithm~\ref{alg:futurefeasible} after every tentative
selection. Recursion proceeds only if the enlarged partial selection
passes this test; otherwise the algorithm backtracks immediately.
All candidate ordering, quota, independence, and backtracking steps are
unchanged.

Thus the baseline and modified searches differ only by the
future-feasibility test. For the control problem, a selected triple
\((i,j,c)\) assigns actuator \(i\) to slot \(j\), so the resulting
schedule is given by
\[
 \tau(i)=j.
\]

\section{Correctness}
\label{sec:correctness}

We now verify that neither the baseline search nor the
reachability-aware pruning changes the set of solutions.

\begin{thm}
\label{thm:correctness}
The baseline algorithm and the modified algorithm return a compatible
basis if and only if such a basis exists.
\end{thm}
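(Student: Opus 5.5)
The plan is to prove soundness and completeness separately, for both algorithms at once. I will view the modified search as the baseline search with an extra filter that only discards partial selections.

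\emph{Preprocessing.} First I check that preprocessing loses no solutions.

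If \(\rank B<d\), then no basis of representatives exists at all, so returning failure is correct.

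Otherwise, by Proposition~\ref{prop:trimming}, every compatible basis uses only blocks \(B_{k_0},\ldots,B_n\). A basis never contains a zero vector, so no candidate with \(b_j^i=0\) can occur in one. Hence every compatible basis is a system of triples with exactly one triple taken from each list \(L_i\). Reordering the lists as \(\OL\) only permutes the order of the families, so it changes neither the set of solutions nor the reconstructed assignment \(i\mapsto c\).

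\emph{Soundness.} Suppose either algorithm returns SUCCESS. Then it has chosen one candidate from each of the \(d\) lists.

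Each candidate was accepted only if it lay outside the span of the previously chosen vectors. So the \(d\) chosen vectors are linearly independent and form a basis of \(V\).

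The quota check guarantees \(\mathrm{Used}[m]\le r_m\) for every \(m\). Moreover \(\sum_{m\ge k_0} r_m=w_{k_0}=d\) and exactly \(d\) vectors are chosen. Hence every inequality is an equality, \(\#\{i:j_i=k\}=r_k\), and for \(k<k_0\) both sides are zero.

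Proposition~\ref{prop:quota-characterization} then shows the output is compatible. The modified search returns only outputs that the baseline search could also produce, so it is sound as well.

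\emph{Completeness.} Suppose a compatible basis \(\mathcal B^*\) exists. I will show that the DFS does not terminate with FAILURE before some success, arguing by contradiction via exhaustiveness.

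I prove by induction on \(\ell\) the following invariant. If the current partial selection at level \(\ell\) agrees with \(\mathcal B^*\) on its first \(\ell-1\) families, then the candidate of \(\mathcal B^*\) for family \(i_\ell\) passes every test. Concretely:
\begin{enumerate}[label=(\alph*)]
\item It passes the quota test, because Proposition~\ref{prop:required} gives that \(\mathcal B^*\) uses at most \(r_m\) vectors from block \(m\) in total.
\item It passes the independence test, because \(\mathcal B^*\) is a basis.
\item In the modified search, the enlarged selection \(S\) is a prefix of \(\mathcal B^*\), so it is extendable. By the future-rank pruning proposition (in contrapositive form) it satisfies \(\dim\Span(S_{\ge k}\cup R_{\ge k}(S))\ge w_k\) for all \(k\). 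The capacity check passes as in (a). So \textsc{FutureFeasible} returns True.
\end{enumerate}
Hence the recursion can descend along \(\mathcal B^*\).

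The DFS loops over all candidates and returns FAILURE at a node only after trying every candidate. Therefore the root returns FAILURE only if the subtree along \(\mathcal B^*\) also failed. That subtree reaches level \(d+1\), which returns SUCCESS, so this is impossible. The search may of course succeed earlier on a different solution, which is fine by soundness.

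The main obstacle is stating this exhaustiveness argument cleanly, since early returns mean the \(\mathcal B^*\)-branch may never actually be visited. I would handle it by the contrapositive lemma: if DFS returns FAILURE at a node, then that node's partial selection has no completion. This is proved by downward induction on \(\ell\), using (a)–(c) to see that any completion's next candidate is among those tried.
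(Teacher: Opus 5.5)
Your proof is correct and follows the paper's argument. Soundness comes from the independence test, the counting $\sum_{j\ge k_0} r_j = d$, and Proposition~\ref{prop:quota-characterization}; completeness comes from checking that the branch determined by a compatible basis survives the quota, independence and future-rank tests. You get the future-rank case from the contrapositive of the pruning proposition, where the paper re-derives $\mathcal B_{\geq k}\subseteq S_{\geq k}\cup R_{\geq k}(S)$ inline, and your exhaustiveness lemma (a FAILURE return certifies that the partial selection has no completion) tightens the paper's claim that the branch is ``eventually reached,'' which is not literally true when an earlier solution is found.
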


\begin{proof}
Suppose first that either algorithm returns a complete selection.
Exactly one vector has then been chosen from each representative
family, and the independence test guarantees that the resulting
\(d\) vectors form a basis.

For every block \(B_j\), the search enforces the upper bound \(r_j\).
Since
\[
 \sum_{j=k_0}^{n} r_j=d,
\]
a complete selection of \(d\) vectors satisfying these bounds must
use exactly \(r_j\) vectors from each block \(B_j\). By
Proposition~\ref{prop:quota-characterization}, the resulting basis is
therefore compatible with the tail-space flag.

Conversely, suppose that a compatible basis \(\mathcal B\) exists.
By Proposition~\ref{prop:trimming}, it uses only the blocks
\(B_{k_0},\ldots,B_n\), so all of its vectors occur in the candidate
lists considered by the search. Hence \(\mathcal B\) determines a
branch of the search tree.

No partial selection along this branch is rejected by the independence
test, since every subfamily of a basis is linearly independent.
Moreover, Proposition~\ref{prop:required} shows that the branch never
exceeds any block quota.

It remains to consider the additional future-rank test used by the
modified algorithm. Let \(S\) be a partial selection lying on the
branch determined by \(\mathcal B\). For any \(k\), every vector of
\(\mathcal B_{\geq k}\) is either already contained in
\(S_{\geq k}\) or belongs to an unassigned representative family in a
block whose quota has not yet been filled. Hence it is contained in
\(R_{\geq k}(S)\). Therefore
\[
 \mathcal B_{\geq k}
 \subseteq
 S_{\geq k}\cup R_{\geq k}(S).
\]
Since \(\mathcal B\) is compatible,
\(\Span(\mathcal B_{\geq k})=W_k\), and consequently
\[
 \dim\Span\bigl(S_{\geq k}\cup R_{\geq k}(S)\bigr)
 \geq w_k.
\]
Thus the future-rank test never rejects this branch.

The branch corresponding to \(\mathcal B\) therefore remains in the
finite search tree and is eventually reached by both algorithms.
\end{proof}

Combining Theorem~\ref{thm:correctness} with
Theorem~\ref{thm:equivalence} gives the corresponding statement for
the control problem.

\begin{cor}
For
\(B=\big[\, A^{N-1}G\mid A^{N-2}G\mid\cdots\mid G\,\big]\), the modified algorithm
returns a solution if and only if there exists a
reachability-profile preserving single-use actuator schedule.
\end{cor}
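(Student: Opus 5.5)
This corollary follows by chaining Theorem~\ref{thm:correctness} with Theorem~\ref{thm:equivalence}, applied to the block matrix whose blocks are \(B_j=A^{N-j}G\) for \(j=1,\ldots,N\). Here \(n=N\), \(d\) is the state dimension, and every block \(B_j\) is a square \(d\times d\) matrix, so the setting of Section~\ref{sec:tails} applies verbatim.

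First I will dispose of the rank hypothesis. The algorithm's preprocessing returns failure when \(\rank B<d\). In that case no schedule can be profile preserving: the condition \eqref{eq:profilepreserving} for \(k=1\) would force \(\mathcal B(\tau)\) to span \(\mathbb R^d\), which is impossible when \(W_1\neq\mathbb R^d\). So in that case both sides of the equivalence are false. Assume from now on that \(w_1=d\), which is the standing assumption of Section~\ref{sec:control}.

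Under this assumption, Theorem~\ref{thm:correctness} says the modified algorithm returns a selection if and only if a basis of representatives compatible with the tail-space flag of \(B\) exists. Theorem~\ref{thm:equivalence} gives the bijection \(\tau\mapsto\mathcal B(\tau)\) between reachability-profile preserving single-use schedules and such compatible bases. In particular, one set is nonempty exactly when the other is. Composing these two equivalences yields the claim.

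I will also record the concrete form of the output. The returned triples \((i,j,c)\) define the schedule \(\tau(i)=j\), as noted at the end of Section~\ref{sec:modified}, and this \(\tau\) is profile preserving by Corollary~\ref{cor:schedule-characterization}.

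The only delicate point is bookkeeping. Representative family \(i\) in the abstract setting must be identified with actuator \(i\), and block index \(j\) with slot \(j\). With these identifications, the trimming to \(k_0\) in Proposition~\ref{prop:trimming} does not exclude any schedule, because profile-preserving schedules assign no actuator to slots with \(r_k=0\). Likewise, discarding zero candidates is harmless, since a basis contains no zero vector. Beyond this, I expect no real obstacle.
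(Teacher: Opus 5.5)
Your core argument matches the paper's proof, which is the one-line combination of Theorem~\ref{thm:correctness} with the bijection $\tau\mapsto\mathcal B(\tau)$ of Theorem~\ref{thm:equivalence}. Your bookkeeping remarks on trimming, zero candidates, and reading off $\tau(i)=j$ are fine.

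The paragraph where you dispose of the case $\rank B<d$ is wrong, however. Condition \eqref{eq:profilepreserving} for $k=1$ says only that $\Span\mathcal B(\tau)=W_1$. It forces $\mathcal B(\tau)$ to span $\mathbb R^d$ only because $W_1=\mathbb R^d$ under the standing assumption, and that is exactly what fails in this case.

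Profile-preserving schedules can in fact exist there. Take $d=2$, $N=1$, any $A$, and $g_1=g_2=e_1$. Then $B=[G]$ and $W_1=\Span\{e_1\}$. The unique schedule $\tau\equiv 1$ selects $(e_1,e_1)$, whose span is $W_1$, so it satisfies \eqref{eq:profilepreserving}. Meanwhile the preprocessing returns failure because $\rank B=1<2$.

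So the two sides are not both false: without $w_1=d$ the corollary is simply false. The statement must be read under the standing assumption of Section~\ref{sec:control}. The proof of Theorem~\ref{thm:equivalence} relies on the same assumption when it concludes from the case $k=1$ that the selected vectors form a basis.

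Drop that paragraph and state $w_1=d$ as a hypothesis instead of claiming to remove it. The rest of your argument then goes through unchanged.
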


\section{Complexity and search reduction}

Let \(N_i=\#L_i\). After ordering the candidate lists as
\[
 \OL=(L_{i_1},\ldots,L_{i_d}),
 \qquad
 N_{i_1}\leq\cdots\leq N_{i_d},
\]
the search tree has depth \(d\). Without pruning, the number of nodes
at depth \(\ell\) is at most
\[
 \prod_{q=1}^{\ell}N_{i_q},
\]
and hence the total number of recursive nodes is bounded by
\[
 1+\sum_{\ell=1}^{d}\prod_{q=1}^{\ell}N_{i_q}.
\]
Thus the exact search remains combinatorial in the worst case.

The baseline search prunes only when a block quota is exhausted or a
linear dependence occurs. The future-rank test may reject a branch
earlier whenever
\[
 \dim\Span\bigl(S_{\geq k}\cup R_{\geq k}(S)\bigr)<w_k
\]
for some \(k\). This does not improve the general worst-case bound, but
can substantially reduce the number of visited nodes for structured
instances. Section~10 gives an explicit family exhibiting such a
reduction.

\section{A three-dimensional example}

Consider
\[
 A_0=
 \begin{bmatrix}
 0&1&0\\
 0&0&1\\
 -1&1&0
 \end{bmatrix},
 \qquad
 G_0=
 \begin{bmatrix}
 1&0&1\\
 0&1&1\\
 0&0&0
 \end{bmatrix},
 \qquad N=3.
\]
Writing \(G_0=[g_1,g_2,g_3]\), the reachability blocks are
\[
\begin{aligned}
 B_1&=A_0^2G_0=
 \begin{bmatrix}
 0&0&0\\
 -1&1&0\\
 0&-1&-1
 \end{bmatrix},
&
 B_2&=A_0G_0=
 \begin{bmatrix}
 0&1&1\\
 0&0&0\\
 -1&1&0
 \end{bmatrix},
&
 B_3&=G_0=
 \begin{bmatrix}
 1&0&1\\
 0&1&1\\
 0&0&0
 \end{bmatrix}.
\end{aligned}
\]
Their tail-rank and quota profiles are
\[
 (w_1,w_2,w_3)=(3,3,2),
 \qquad
 (r_1,r_2,r_3)=(0,1,2).
\]

The schedule
\[
 \tau_{\mathrm{good}}=(3,2,3)
\]
selects \(g_1,A_0g_2,g_3\). These vectors are linearly independent,
and their selected tail ranks are \((3,3,2)\); hence the schedule is
reachability-profile preserving.

By contrast,
\[
 \tau_{\mathrm{bad}}=(1,2,3)
\]
selects \(A_0^2g_1,A_0g_2,g_3\). These vectors are again linearly
independent, so the schedule is terminally controllable, but its
selected tail-rank profile is
\[
 (3,2,1)\neq(3,3,2).
\]
Equivalently, its block-count vector is
\((1,1,1)\), rather than the required \((0,1,2)\).
Thus terminal controllability does not imply preservation of the
complete reachability profile.

\section{An explicit exponential-separation family}
\label{sec:exponential-family}

We now use the system of Section~9
to construct a family for which future-rank pruning reduces the number
of nodes visited before the first solution from exponential to linear
growth.

The counts below refer to the deterministic ordering used by the
algorithm: candidate lists are ordered by increasing cardinality,
ties preserve the original actuator order, and candidates within each
list are examined in decreasing block index.

\begin{prop}
\label{prop:exponential-separation}
For \(m\geq1\), let
\[
 A_m=I_m\otimes A_0,
 \qquad
 \widetilde G_m=I_m\otimes G_0,
\]
and obtain \(G_m\) from \(\widetilde G_m\) by the column permutation
\[
 \pi_m=
 (1,4,\ldots,3m-2,\;
  2,5,\ldots,3m-1,\;
  3,6,\ldots,3m).
\]
Thus the actuators are ordered by type: first all type--\(1\)
actuators, then all type--\(2\), and finally all type--\(3\).

For horizon \(N=3\), set \(B_j=A_m^{3-j}G_m\). Then
\[
 (w_1,w_2,w_3)=(3m,3m,2m),
 \qquad
 (r_1,r_2,r_3)=(0,m,2m).
\]
With the ordering above, the baseline search visits
\[
 N_{\mathrm{base}}(m)=3\cdot2^m+m-2
\]
nodes before finding its first compatible basis, whereas the search
with future-rank pruning visits
\[
 N_{\mathrm{future}}(m)=3m+1.
\]
\end{prop}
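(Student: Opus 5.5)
The plan is to reduce everything to a single copy of the system of Section~9 and then analyse the two search trees level by level. Since \(A_m\) and \(\widetilde G_m\) are block diagonal, the tail spaces split as direct sums, \(W_k(A_m,G_m)=\bigoplus_{c=1}^m W_k(A_0,G_0)\), and the column permutation \(\pi_m\) does not affect ranks. From the profile \((3,3,2)\) computed in Section~9 this gives \((w_1,w_2,w_3)=(3m,3m,2m)\), hence \((r_1,r_2,r_3)=(0,m,2m)\) and \(k_0=2\), which settles the first part of the statement.

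Next I determine the candidate lists after trimming (Proposition~\ref{prop:trimming}). For copy \(c\) I write the vectors in the three coordinates of that copy. The block-\(3\) and block-\(2\) candidates are
\[
 g_1=e_1,\ A_0g_1=-e_3;\qquad g_2=e_2,\ A_0g_2=e_1+e_3;\qquad g_3=e_1+e_2,\ A_0g_3=e_1.
\]
All of them are nonzero, so every list has length \(2\). The ordering in \(\OL\) is therefore exactly the type order of \(\pi_m\): first the \(m\) type-\(1\) actuators, then the type-\(2\) ones, then the type-\(3\) ones, each trying block \(3\) before block \(2\).

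I then classify the compatible bases using Proposition~\ref{prop:quota-characterization} together with block-diagonality. Each copy must contribute exactly one block-\(2\) vector and two block-\(3\) vectors, since the block-\(3\) selections must span \(W_3=\bigoplus\Span\{e_1,e_2\}\). Inside a copy, placing the type-\(3\) actuator in block \(2\) gives \(\{e_1,e_2,e_1\}\), which is dependent. Placing type \(1\) or type \(2\) in block \(2\) gives a basis. So a compatible basis is the same thing as a choice, for each copy, of which of its type-\(1\) and type-\(2\) actuators goes to block \(2\), with type \(3\) always in block \(3\).

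For the baseline count I trace the DFS explicitly.
\begin{enumerate}[label=(\alph*)]
\item At depths \(1\) to \(m\) the search greedily puts every type-\(1\) actuator into block \(3\).
\item At depths \(m+1\) to \(2m\) the type-\(2\) actuators each try block \(3\) first. A copy in which both type-\(1\) and type-\(2\) went to block \(3\) forces its type-\(3\) actuator into block \(2\), which later fails the independence test.
\item Block \(3\) has quota \(2m\), so a type-\(3\) actuator can only use block \(3\) while that quota is not yet exhausted.
\end{enumerate}
I would describe the subtree below depth \(m\) by the binary string recording which type-\(2\) actuators chose block \(3\). The quota then pins down the type-\(3\) placements, and failure is detected at a predictable depth. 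Summing the visited nodes over the strings examined before the first success, namely the all-block-\(2\) string for type \(2\), should give a geometric sum \(\sum 2^{\cdot}\). I would set up a recursion \(N_{\mathrm{base}}(m)\) in terms of \(N_{\mathrm{base}}(m-1)\), expecting roughly doubling plus a linear correction, and verify the closed form \(3\cdot 2^m+m-2\) by induction, checking the base case \(m=1\) by hand (\(7\) nodes).

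For the pruned search I would show that the future-rank test rejects every wrong type-\(2\) choice immediately. As soon as a type-\(2\) actuator of copy \(c\) is placed in block \(3\) while its type-\(1\) partner is already in block \(3\), I check \eqref{eq:future-rank} at \(k=2\) or \(k=3\). For this I compute \(S_{\geq k}\cup R_{\geq k}(S)\) restricted to copy \(c\), where the only remaining candidates are \(e_1+e_2\) and \(e_1\). This should show that its span misses \(e_3\) in that copy, so the rank falls below \(w_2\). Consequently each type-\(2\) level costs exactly two nodes (one rejected, one accepted), the type-\(1\) and type-\(3\) levels cost one node each, and with the root this gives \(1+m+2m+m\)-type bookkeeping. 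I would recount carefully to reach \(3m+1\).

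The main obstacle is the baseline count. The precise interplay between the block-\(3\) quota and the depth at which independence fails must be tracked exactly, including which rejected candidates count as visited nodes. This is where I would first write out \(m=1,2\) in full to fix the conventions and only then do the induction.
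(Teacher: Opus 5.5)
Your rank profile, candidate lists, classification of the compatible bases (each copy contributes exactly one block-$2$ vector, never from its type-$3$ actuator), and the level-$k=2$ future-rank rejection of a block-$3$ type-$2$ choice are all correct and agree with the paper. The gap is that the substance of the statement, the two exact node counts, is never established, and the numbers you anchor your plan on contradict the formulas. The paper's convention is that a node is a call of the recursive DFS, including the final successful call, and a candidate rejected by the quota, independence, or future-rank test creates no node. Under this convention the baseline tree for $m=1$ consists of the root, type~$1$ in block~$3$, type~$2$ in block~$3$, type~$2$ in block~$2$, and type~$3$ in block~$3$. That is five nodes, matching $3\cdot2^1+1-2=5$ and Table~\ref{tab:nodecounts}, not seven. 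Similarly, ``each type-$2$ level costs exactly two nodes'' would give $4m+1$. In fact the rejected block-$3$ candidate creates no node, so the pruned search has exactly one node at each depth $0,\dots,3m$. You also need that the test never rejects the correct choices. This follows from the argument of Theorem~\ref{thm:correctness}: no partial selection on the branch of a compatible basis is pruned.

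For the baseline, what is missing is a direct depth-by-depth count. An induction on $m$ is awkward, because the type ordering interleaves all copies, so the tree for $m$ does not decompose in any simple way into the tree for $m-1$. The count goes as follows.
\begin{enumerate}[label=(\alph*)]
\item Since a solution exists with every type-$1$ actuator in block~$3$, the search never leaves this greedy path. This gives one node at each depth $0,\dots,m$.
\item At the type-$2$ level of copy $s$, no candidate is ever rejected. At most $m+s-1<2m$ block-$3$ vectors and $s-1<m$ block-$2$ vectors are in use, and the only earlier vector in that copy is $e_{s,1}$.
\item Inside this subtree the unique solution is the all-block-$2$ string. It comes last in DFS order because block~$3$ is tried first, so there are $2^{s}$ nodes at depth $m+s$.
\item At the type-$3$ level of copy $s$, the block-$2$ candidate $e_{s,1}$ is always dependent. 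The block-$3$ candidate $e_{s,1}+e_{s,2}$ survives exactly when the type-$2$ actuator of copy $s$ went to block~$2$. On such branches at most $2m-1$ block-$3$ vectors are in use, and when the quota does bind, the candidate is dependent anyway.
\item Hence the path below each type-$2$ string continues precisely up to the first copy whose type-$2$ actuator chose block~$3$. This gives $2^{m-s}$ nodes at depth $2m+s$.
\end{enumerate}
Summing, $(m+1)+(2^{m+1}-2)+(2^m-1)=3\cdot2^m+m-2$. Your remark that the block-$3$ quota ``pins down the type-$3$ placements'' misplaces the mechanism: these branches are killed by the independence test, not by the quota.
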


\begin{proof}
Write
\[
 V=V_1\oplus\cdots\oplus V_m,
 \qquad
 V_r=\Span\{e_{r,1},e_{r,2},e_{r,3}\}.
\]
In the \(r\)-th summand, the three actuator vectors and their images
under \(A_m\) are
\[
\begin{aligned}
 g_{r,1}&=e_{r,1},
 & A_mg_{r,1}&=-e_{r,3},\\
 g_{r,2}&=e_{r,2},
 & A_mg_{r,2}&=e_{r,1}+e_{r,3},\\
 g_{r,3}&=e_{r,1}+e_{r,2},
 & A_mg_{r,3}&=e_{r,1}.
\end{aligned}
\]

The vectors in block \(3\) span
\(\Span\{e_{r,1},e_{r,2}\}\) in each summand, so \(w_3=2m\).
Adding block \(2\) supplies the missing direction \(e_{r,3}\), hence
\(W_2=V\) and \(w_2=3m\). Also \(w_1=3m\). Therefore
\[
 (w_1,w_2,w_3)=(3m,3m,2m),
 \qquad
 (r_1,r_2,r_3)=(0,m,2m),
\]
and \(k_0=2\).

We first consider the baseline search. Because the actuators are
ordered by type, the first \(m\) representative families are of
type~\(1\). Their first candidates are the block--\(3\) vectors
\(e_{r,1}\), which are mutually independent and belong to the branch
leading to the first solution. Hence exactly one node is visited at
each depth \(0,\ldots,m\).

For a type--\(2\) actuator, the two candidates in the trimmed problem
are
\[
 e_{r,2}\quad\text{from block }3,
 \qquad
 e_{r,1}+e_{r,3}\quad\text{from block }2.
\]
Both are independent of the previously selected \(e_{r,1}\), and at
this stage neither choice violates a block quota. Thus each
type--\(2\) actuator doubles the number of surviving branches. After
\(s\) such actuators, there are \(2^s\) nodes at depth \(m+s\), so
\[
 N_k=2^{k-m},
 \qquad m<k\leq2m.
\]

For a type--\(3\) actuator, the candidates are
\[
 e_{r,1}+e_{r,2}\quad\text{from block }3,
 \qquad
 e_{r,1}\quad\text{from block }2.
\]
The block--\(2\) candidate is always dependent on the previously
selected type--\(1\) vector \(e_{r,1}\). The block--\(3\) candidate
is admissible exactly when the corresponding type--\(2\) actuator was
chosen from block \(2\). Indeed,
\[
 \{e_{r,1},e_{r,2},e_{r,1}+e_{r,2}\}
\]
is dependent, whereas
\[
 \{e_{r,1},e_{r,1}+e_{r,3},e_{r,1}+e_{r,2}\}
\]
is independent.

Each processed type--\(3\) actuator therefore eliminates one half of
the remaining branches. Hence
\[
 N_k=2^{3m-k},
 \qquad 2m<k\leq3m.
\]
The complete depth profile is
\[
 N_k=
 \begin{cases}
 1, & 0\leq k\leq m,\\
 2^{k-m}, & m<k\leq2m,\\
 2^{3m-k}, & 2m<k\leq3m.
 \end{cases}
\]
Summing over the depths gives
\[
\begin{aligned}
 N_{\mathrm{base}}(m)
 &=(m+1)
   +\sum_{s=1}^{m}2^s
   +\sum_{s=1}^{m}2^{m-s} \\
 &=3\cdot2^m+m-2.
\end{aligned}
\]

Now consider future-rank pruning. Suppose that a type--\(2\) actuator
in \(V_r\) is tentatively chosen from block \(3\). The selected
directions in this summand are then \(e_{r,1}\) and \(e_{r,2}\).
The only remaining actuator is of type~\(3\), whose possible vectors
are \(e_{r,1}\) and \(e_{r,1}+e_{r,2}\). Thus every direction still
available in \(V_r\) lies in
\(\Span\{e_{r,1},e_{r,2}\}\), so the attainable contribution of this
summand to the level--\(2\) rank is at most \(2\).

The remaining \(m-1\) summands contribute at most \(3(m-1)\)
dimensions. Therefore the total attainable rank is at most
\[
 2+3(m-1)=3m-1<w_2=3m.
\]
The future-rank test rejects the block--\(3\) choice immediately.

Consequently, every type--\(2\) actuator is forced into block \(2\).
After these choices, the type--\(3\) block--\(3\) candidates are
independent of the previously selected vectors. The search therefore
follows a single recursive branch through depths \(0,\ldots,3m\), and
hence
\[
 N_{\mathrm{future}}(m)=3m+1.
\]
\end{proof}

\begin{remark}
A visited node means a call to the recursive DFS procedure. A
candidate rejected by a quota, independence, or future-rank test does
not create a new visited node.
\end{remark}

\begin{remark}
The formulas in Proposition~\ref{prop:exponential-separation} refer to
the fixed actuator and candidate ordering above and to the search for
the first solution. They are not order-independent worst-case bounds.
They exhibit a structured family for which future-rank pruning changes
the first-solution search tree from exponential to linear size.
\end{remark}

Since \(d=3m\), the same counts may be written as
\[
 N_{\mathrm{base}}(d)
 =3\cdot2^{d/3}+\frac d3-2,
 \qquad
 N_{\mathrm{future}}(d)=d+1.
\]
Thus, for this family and the fixed search ordering, future-rank
pruning changes the number of visited nodes before the first solution
from exponential growth in \(d\) to linear growth.

\section{Computational experiments}
\label{sec:experiments}

We compare the baseline depth-first search with the version using future-rank pruning on the family of Section 10. All computations were carried out in exact arithmetic using the TailBasis package [7]. The package also contains the code for analyzing the examples and reproducing the computational experiments presented in this paper. A visited node means one call to the recursive depth-first search procedure.

For \(m=1,\ldots,8\), the observed numbers of visited nodes are shown
in Table~\ref{tab:nodecounts}. Recall that the state-space dimension is
\(d=3m\).

\begin{table}[ht]
\centering
\begin{tabular}{c|r|r|r}
\(m\) & Baseline & Future-rank & Future-rank prunings\\
\hline
1 & 5   & 4  & 1\\
2 & 12  & 7  & 2\\
3 & 25  & 10 & 3\\
4 & 50  & 13 & 4\\
5 & 99  & 16 & 5\\
6 & 196 & 19 & 6\\
7 & 389 & 22 & 7\\
8 & 774 & 25 & 8
\end{tabular}
\vspace{14pt}
\caption{Visited nodes and future-rank pruning events for the
benchmark family.}
\label{tab:nodecounts}
\end{table}

The observed counts agree exactly with Proposition~10.1:
\[
 N_{\mathrm{base}}(m)=3\cdot2^m+m-2,
 \qquad
 N_{\mathrm{future}}(m)=3m+1.
\]
For example, at \(m=8\), hence \(d=24\), the baseline search visits
\(774\) nodes, whereas the future-rank search visits \(25\).

The observed depth profiles also agree with the formulas derived in
Section~10. For \(m=4\), the baseline search visits
\[
 1,1,1,1,1,2,4,8,16,8,4,2,1
\]
nodes at successive depths, whereas the future-rank search visits one
node at every depth. Thus the computations reproduce both the total
node counts and the structure of the search tree predicted by
Proposition~10.1.

\section{Further directions}

The present formulation assumes \(d\) actuator channels in a
\(d\)-dimensional state space and requires every actuator to be used
exactly once. Several natural extensions remove these restrictions.

One may allow more actuator channels than state dimensions and seek
both a subset of actuators and their activation times. A second
possibility is to replace the single-use condition by prescribed upper
bounds on the number of activations of each actuator. Both variants
introduce additional combinatorial constraints into the representative
selection problem.

Another direction is optimization. Once feasible
reachability-profile preserving schedules are known to exist, one may
minimize a prescribed cost depending on activation times, actuator
usage, or control amplitudes. The present depth-first search can then
serve as the basis of a branch-and-bound procedure.

\section{Conclusion}

We studied a constrained representative-selection problem for a block
matrix \(B=[B_1|\cdots|B_n]\) through its rectangular tail matrices
\(B_{\geq k}\). Their column spaces form a nested family, while the
rank differences \(r_k=w_k-w_{k+1}\) determine the exact number of
representatives that a compatible basis must select from each block.
This reduces compatibility with the entire tail-space flag to a basis
condition together with intrinsic block quotas.

For finite-horizon linear control, the same structure arises from the
reachability matrix
\([A^{N-1}G|\cdots|G]\). A single-use actuator schedule preserves the
complete reachability profile precisely when its selected terminal
directions form a compatible basis of representatives. This
interpretation leads naturally to an exact depth-first search in which
the tail ranks are used not only to determine the required block
counts, but also to prune partial selections that can no longer attain
the required future ranks.

For the explicit family constructed in
Section~\ref{sec:exponential-family}, the number of visited nodes
before the first solution changes from
\[
 N_{\mathrm{base}}(d)
 =3\cdot2^{d/3}+\frac{d}{3}-2,
 \qquad
 N_{\mathrm{future}}(d)=d+1.
\]
The computational experiments reproduce these counts for all tested
dimensions \(d=3,6,\ldots,24\), illustrating that tail-rank information
can substantially reduce the search tree when the underlying
reachability structure is sufficiently rigid.



%
%

\end{document}